\documentclass{amsart}
\usepackage{amssymb,mathtools}
\usepackage[british]{babel}
\usepackage{enumitem}
\usepackage[latin1]{inputenc}

\usepackage{url}
\usepackage{stmaryrd}
\usepackage{appendix}
\usepackage{xcolor}
\usepackage{stackengine}

\newtheorem{theorem}{Theorem}
\numberwithin{theorem}{section}
\newtheorem{corollary}[theorem]{Corollary}
\newtheorem{lemma}[theorem]{Lemma}
\newtheorem{proposition}[theorem]{Proposition}
\newtheorem*{min-bad-arr-lem}{Minimal bad array lemma}

\theoremstyle{definition}
\newtheorem{definition}[theorem]{Definition}

\newtheorem*{claim}{Claim}

\newcommand\ledot{\mathrel{\ensurestackMath{%
  \stackengine{-.5ex}{\lessdot}{-}{U}{c}{F}{F}{S}}}}
\newcommand{\base}[1]{\textstyle\bigcup #1}
\newcommand{\barrsucc}[2]{#1 \ledot #2}

\newcommand{\barrpropsucc}[2]{#1 \lessdot #2}
\newcommand{\extbarr}[3]{E(#1,#2,#3)}

\newcommand{\minbarr}[2]{m(#1,#2)}

\title{The logical strength of minimal bad arrays}
\author{Anton Freund}
\author{Fedor Pakhomov}
\author{Giovanni Sold\`a}

\address{Anton Freund, University of W\"urzburg, Institute of Mathematics, Emil-Fischer-Stra{\ss}e~40, 97074 W\"urzburg, Germany}
\email{anton.freund@uni-wuerzburg.de}
\address{Fedor Pakhomov and Giovanni Sold\`a, Department of Mathematics: Analysis, Logic and Discrete Mathematics, Ghent University, Krijgslaan 281 S8, 9000 Ghent, Belgium}
\email{fedor.pakhomov@ugent.be{\normalfont, }giovanni.a.solda@gmail.com}

\thanks{The work of Anton Freund has been funded by the Deutsche Forschungsgemeinschaft (DFG, German Research Foundation) -- Project number 460597863. The work of Fedor Pakhomov and Giovanni Sold\`a has been funded by the FWO grant G0F8421N}

\begin{document}

\begin{abstract}
This paper studies logical aspects of the notion of better quasi order, which has been introduced by C.~Nash-Williams (\emph{Mathematical Proceedings of the Cambridge Philosophical Society} 1965 \& 1968). A central tool in the theory of better quasi orders is the minimal bad array lemma. We show that this lemma is exceptionally strong from the viewpoint of reverse mathematics, a framework from mathematical logic. Specifically, it is equivalent to the set existence principle of $\Pi^1_2$-comprehension, over the base theory~$\mathsf{ATR_0}$. 
\end{abstract}

\keywords{Better quasi order, minimal bad array, $\Pi^1_2$-comprehension, reverse mathematics}
\subjclass[2020]{06A06, 03B30, 03F35}

\maketitle

\section{Introduction}

A quasi order~$Q$ with order relation~$\leq_Q$ is a well quasi order if any infinite sequence $q_0,q_1,\ldots$ in~$Q$ admits $i<j$ with~$q_i\leq_Q q_j$. There is a deep theory of well quasi orders~\cite{kruskal-rediscovered} with the graph minor theorem~\cite{robertson-seymour-gm} as a towering achievement. A limitation of well quasi orders is the lack of closure properties under infinitary operations. In particular, R.~Rado~\cite{rado-counterexample} has exhibited a well quasi order~$Q$ such that the power set $\mathcal P(Q)$ is no well quasi order when we put
\begin{equation*}
X\leq_{\mathcal P(Q)}Y\quad:\Leftrightarrow\quad\text{for any~$p\in X$ there is a~$q\in Y$ with $p\leq_Q q$}.
\end{equation*}
To secure such closure properties, C.~Nash-Williams~\cite{nash-williams-trees,nash-williams-bqo} has introduced the more restrictive notion of better quasi order, which we recall below. As a famous application, we mention R.~Laver's proof~\cite{laver71} of Fra\"iss\'e's conjecture that the embeddability relation on $\sigma$-scattered linear orders is a well and even a better quasi order. A~central tool in the theory of better quasi orders is the minimal bad array lemma, which is also known as the forerunning principle. In the present paper, we use methods from logic to show that any proof of that principle must use exceptionally strong set existence axioms. A precise formulation of this result is given below.

Let us introduce some notation that is needed to define the notion of better quasi order. The collections of finite and countably infinite subsets of a set~$V$ are denoted by $[V]^{<\omega}$ and~$[V]^\omega$, respectively. Sets from~$[\mathbb N]^{<\omega}\cup[\mathbb N]^\omega$ are identified with the sequences that enumerate them in increasing order. For such sequences we write $s\sqsubset t$ to assert that $s$ is a proper initial segment of~$t$. Note that this forces~$s$ to be finite. For $X\in[\mathbb N]^\omega$ with least element~$x$, we put $X^-:=X\backslash\{x\}$. In the following we use infix notation $p\,R\,q$ to express $(p,q)\in R$.

\begin{definition}
Consider a set~$Q$ with a binary relation~$R$. By a $Q$-array we mean a map $f:[V]^\omega\to Q$ with $V\in[\mathbb N]^\omega$ that is continuous in the sense that each~$X\in[V]^\omega$ admits an $s\sqsubset X$ such that $f$ is constant on~$\{Y\in[V]^\omega\,|\,s\sqsubset Y\}$. Such an $f$ is called $R$-bad if there is no~$X\in[V]^\omega$ with $f(X)\,R\,f(X^-)$. We say that $R$ is a better relation on~$Q$ if no $Q$-array is~$R$-bad. A better quasi order is a quasi order such that the order relation is a better relation on the underlying set.
\end{definition}

It is instructive to show that any better quasi order is a well quasi order. In~the proof of Proposition~\ref{prop:pouzet}, we will implicitly confirm that $Q\mapsto\mathcal P(Q)$ preserves better quasi orders. The case of better relations that are not quasi orders has not been studied widely, even though it is mentioned in~\cite{pouzet-better-relation,shelah-bqo}. We will consider it in an intermediate argument. Let us note that any better relation is reflexive.

The given definition of better quasi orders does essentially coincide with the original definition in terms of blocks. We recall that a block with base $V\in[\mathbb N]^\omega$ is a set $B\subseteq[V]^{<\omega}$ such that any $X\in[V]^\omega$ has a unique initial segment in~$B$, which may not be empty. The latter ensures that the base can be recovered as $\bigcup B$. Any function $f_0:B\to Q$ on a block $B$ with base~$V$ induces a $Q$-array $f:[V]^\omega\to Q$ given by $f(X)=f_0(s)$ for $s\in B$ with $s\sqsubset X$. Continuity entails that any array is induced in this way. In the following, we implicitly assume that all arrays are given as functions on blocks. This has the advantage that arrays become countable objects, which will be relevant for our logical analysis. Later, we will briefly discuss another equivalent definition of better quasi orders in terms of special blocks, which are called barriers. There is a further equivalent definition due to S.~Simpson~\cite{simpson-borel-bqos}, who admits all Borel measurable functions as arrays. In our logical analysis, we~will not consider this last definition by Simpson.

Now that we have discussed the definition of better quasi order, we introduce the notions that appear in the minimal bad array lemma.

\begin{definition}\label{def:minimal-bad}
A partial ranking of a reflexive binary relation~$R$ on a set~$Q$ is a well-founded partial order~$\leq'$ on~$Q$ such that $p\,R\,q\leq'r$ entails $p\,R\,r$. Given such a ranking and $Q$-arrays $f:[V]^\omega\to Q$ and $g:[W]^\omega\to Q$, we write $f\leq' g$ to express that we have $V\subseteq W$ as well as $f(X)\leq'g(X)$ for all~$X\in[V]^\omega$. In case we even have $f(X)<'g(X)$ for all $X\in[V]^\omega$, we write $f<'g$. A $Q$-array $g$ is said to be $\leq'$-minimal $R$-bad if it is $R$-bad and there is no $R$-bad $Q$-array $f<'g$.
\end{definition}

Let us note that $q\leq'r$ implies $q\,R\,r$ when $\leq'$ is a partial ranking of~$R$, given that the latter is reflexive. Conversely, if $R$ is transitive and $q\leq'r$ implies $q\,R\,r$, then $p\,R\,q\leq'r$ implies $p\,R\,r$. So in the case of quasi orders, our notion of partial ranking coincides with the usual one. The proofs of Proposition~\ref{prop:pouzet} and Theorem~\ref{thm:pi12-imp-mbal} suggest that the given definition is pertinent for relations that are not transitive. We can now present one of the central tools from the theory of better quasi orders.

\begin{min-bad-arr-lem}[`Simpson's version']
Consider a quasi order~$Q$ and a partial ranking~$\leq'$ of the order relation~$\leq_Q$ on~$Q$. Whenever~$f_0$ is a $\leq_Q$-bad $Q$-array, there is a $Q$-array~$f\leq'f_0$ that is $\leq'$-minimal $\leq_Q$-bad.
\end{min-bad-arr-lem}

The given version coincides with the one of Simpson~\cite{simpson-borel-bqos}, except that the latter works with a larger class of Borel measurable arrays, as mentioned above. A different definition of the relation $<'$ between arrays has been given by Laver~\cite{laver-min-array}. It leads to a different variant of the minimal bad array lemma, which we will later introduce as Laver's version. In both cases, we have a version for quasi orders and an obvious generalization to reflexive relations. We will see that our results apply to all indicated versions.

Our logical analysis takes place within the framework of reverse mathematics. For a comprehensive introduction, we refer to the founding paper by H.~Friedman~\cite{friedman-rm} and the textbook by S.~Simpson~\cite{simpson09}. To give a rather informal account of a central idea, we note that each class $\mathcal C$ of properties (more precisely: formulas of second order arithmetic) gives rise to an axiomatic principle called $\mathcal C$-comprehension, which asserts that the set $\{n\in\mathbb N\,|\,\varphi(n)\}$ exists for any~$\varphi$ from~$\mathcal C$. One writes $\mathcal C\textsf{-CA}_0$ for the theory that is axiomatized by $\mathcal C$-comprehension and some basic facts about the natural numbers. Important cases include the class $\Delta^0_1$ of properties that are algorithmically decidable, the class $\Pi^0_\infty$ of arithmetical properties (given by formulas that quantify over finite but not over infinite objects), the class $\Pi^1_1$ of coanalytic properties, and the class $\Pi^1_2$ of properties that have the form $\forall X\subseteq\mathbb N:\psi(n,X)$ for analytic~$\psi$. The theories $\Delta^0_1\textsf{-CA}_0$ and $\Pi^0_\infty\text{-CA}_0$ are usually denoted by $\mathsf{RCA_0}$ (recursive comprehension) and $\mathsf{ACA_0}$ (arithmetical comprehension), respectively. We will also encounter the theory $\mathsf{ATR_0}$ of arithmetic transfinite recursion, which permits to iterate arithmetical comprehension along arbitrary well orders.

To analyze a given theorem in reverse mathematics, one will often determine a class~$\mathcal C$ such that the principle of~$\mathcal C$-comprehension is equivalent to the theorem in question. Such an equivalence is informative if one proves it over a sufficiently weak base theory. A common choice is $\mathsf{RCA_0}$, but we will mostly work with $\mathsf{ATR_0}$ as base theory. This makes sense because Simpson's version of the minimal bad array lemma is typically used in conjunction with the open Ramsey theorem, which is equivalent to arithmetic transfinite recursion over~$\mathsf{RCA_0}$.

Now that both the mathematical notions and the logical framework have been fixed, we can formulate the main result of the present paper.

\begin{theorem}\label{thm:main}
The following are equivalent over~$\mathsf{ATR_0}$:
\begin{enumerate}[label=(\roman*)]
\item the principle of $\Pi^1_2$-comprehension,
\item Simpson's version of the minimal bad array lemma (as stated above),
\item Laver's version of the minimal bad array lemma (as stated in Section~\ref{sect:mba-from-Pi12CA}).
\end{enumerate}
\end{theorem}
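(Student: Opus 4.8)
The plan is to establish the three-way equivalence by proving the cycle $(i) \Rightarrow (ii) \Rightarrow (i)$ together with the analogous implications for Laver's version, so that both (ii) and (iii) are sandwiched between $\Pi^1_2$-comprehension. Concretely, I would organise the work as: (a) a forward direction $(i) \Rightarrow (ii)$ proving Simpson's version from $\Pi^1_2$-comprehension over $\mathsf{ATR_0}$ (this is the content foreshadowed by \emph{Theorem~\ref{thm:pi12-imp-mbal}}), (b) a reversal $(ii) \Rightarrow (i)$ deriving $\Pi^1_2$-comprehension back from Simpson's lemma, and (c) matching arguments tying Laver's version into the same loop. Since all three statements sit above $\mathsf{ATR_0}$, I would first record any consequences of the base theory I intend to use freely --- notably the open Ramsey theorem, which is equivalent to $\mathsf{ATR_0}$ over $\mathsf{RCA_0}$ and is the standard companion to Simpson's lemma.

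\emph{For the forward direction} $(i) \Rightarrow (ii)$, I would carry out a descent argument along the partial ranking $\leq'$. Given a $\leq_Q$-bad array $f_0$, I want to produce a $\leq'$-minimal bad array $f \leq' f_0$. The naive idea is transfinite recursion: repeatedly replace the current bad array by a strictly $<'$-smaller bad array whenever one exists, and argue that well-foundedness of $\leq'$ halts the process. The subtlety is that one must make uniform, definable choices of the descending arrays and verify that the construction can be carried out using only $\Pi^1_2$-comprehension. The key move is that the predicate ``there exists an $R$-bad array $f <' g$'' is itself $\Sigma^1_2$ (an existential set quantifier over a $\Pi^1_1$-matrix encoding badness and the strict inequality of arrays), so its negation --- minimality --- is $\Pi^1_2$; $\Pi^1_2$-comprehension then lets me form the set of arrays that are minimal, and a careful bookkeeping along an $\mathsf{ATR_0}$-available well order extracts an actual minimal element below $f_0$. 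I would present this as an application of $\Pi^1_2$-comprehension to isolate, for each rank level, the available bad arrays, and then stitch these together.

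\emph{The reversal} $(ii) \Rightarrow (i)$ is where I expect the \textbf{main obstacle} to lie, and it is the heart of the paper. To derive $\Pi^1_2$-comprehension from the minimal bad array lemma, I would design a family of quasi orders (indexed by the parameter of a given $\Pi^1_2$ formula) together with partial rankings, engineered so that the \emph{existence} and \emph{shape} of the minimal bad array encodes the truth value of $\forall X\,\psi(n,X)$ for each $n$. The technical challenge is twofold: the coding quasi order must be constructed inside $\mathsf{ATR_0}$ from the given formula, and one must show that deciding minimality of the resulting bad arrays is genuinely as hard as $\Pi^1_2$-comprehension --- i.e.\ that a minimal bad array cannot be produced by any weaker means. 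This typically requires a delicate well-foundedness or pseudo-well-ordering argument, leveraging the fact that minimality of a bad array is a $\Pi^1_2$ condition (as noted above) that the lemma asserts is \emph{witnessed}, thereby handing us exactly the comprehension instance we need. I anticipate that building the coding orders so that the partial ranking is genuinely well-founded while the bad-array structure tracks the $\Pi^1_2$-statement will be the most intricate construction.

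\emph{Finally, for Laver's version}, since Laver's definition differs only in the relation $<'$ between arrays, I would show that over $\mathsf{ATR_0}$ the two versions are interderivable, or else run parallel forward and reversal arguments. In the forward direction the descent argument adapts with minimal change once the corresponding strictness predicate is checked to remain $\Sigma^1_2$; in the reversal, the same coding family should serve, since the encoding depends on the badness structure rather than on the precise comparison of arrays. Closing this last loop completes the cycle and yields the stated equivalence of all three principles over~$\mathsf{ATR_0}$.
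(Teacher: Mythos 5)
The main gap is in your forward direction $(i)\Rightarrow(ii)$. The naive descent you describe --- replace the current bad array by a strictly $<'$-smaller bad array and argue that well-foundedness of $\leq'$ halts the process --- fails, because the induced relation $<'$ on \emph{arrays} is not well-founded even when $\leq'$ on $Q$ is: at each step the domain shrinks from $[W]^\omega$ to $[V]^\omega$ with $V\subseteq W$, the intersection of the bases may be finite, and so no single $X$ witnesses an infinite $\leq'$-descent in $Q$. This non-well-foundedness is precisely the difficulty the lemma addresses. The paper (Theorem~\ref{thm:pi12-imp-mbal}) instead follows Fra\"iss\'e: it runs an \emph{infinite} descent $f_{i+1}\lessdot' f_i$ in Laver's block-sensitive ordering, subject to two side conditions --- every $n\in\bigcup B_i$ with $n\leq m(B_{i+1},B_i)$ survives into $\bigcup B_{i+1}$, and $m(B_{i+1},B_i)$ is chosen as small as possible among bad $g\lessdot' f_i$ --- then proves that $p(i)=m(B_{i+1},B_i)$ is non-decreasing with fibres of size at most $2^n$, so its range $H$ is infinite and contained in every $\bigcup B_i$, and finally extracts a limit block $C\subseteq[H]^{<\omega}$ of stabilized elements carrying the minimal bad array. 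Nothing in your sketch produces such a limit object. A second, related error: ``$\Pi^1_2$-comprehension lets me form the set of arrays that are minimal'' is a type mismatch, since comprehension yields sets of natural numbers while arrays are set-coded objects. The paper's actual mechanism is that $\Pi^1_2$-comprehension is equivalent to every set being contained in a countable coded $\beta_2$-model; working inside such a model, the $\Pi^1_2$ step-conditions become absolute (hence of low complexity relative to the model's code), and $\Sigma^1_2$-dependent choice --- which is available, whereas $\Pi^1_2$-choice is not --- builds the sequence $f_i$. Note also that the paper never proves $(i)\Rightarrow(ii)$ or $(ii)\Rightarrow(iii)$ directly: the circle is $(i)\Rightarrow(iii)\Rightarrow(ii)\Rightarrow(i)$, with Laver-to-Simpson (Proposition~\ref{prop:laver-to-simpson}) requiring its own nontrivial block surgery (the block $C'$ built from $B_0$) to handle elements of $C$ with no proper $\sqsubset$-predecessor in $B$.

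Your reversal $(ii)\Rightarrow(i)$ is in the right spirit --- a coding family whose minimal bad array decodes the $\Pi^1_2$ truth values --- but it omits the ingredients that make this work. The paper uses Marcone's result (Proposition~\ref{prop:Pi12-complete}) that ``$R_n$ is a better relation on $Q_n$'' is $\Pi^1_2$-complete already over $\mathsf{ACA_0}$; it pads each $Q_n$ by $\omega^*$ and orders finite sequences $\sigma$ with entries $\sigma_i\in Q_i^*$, so that for a $\leq'$-minimal bad array $f$ one proves $f(s)_i\in\omega^*$ for all relevant $s$ exactly when $R_i$ is better, after which the comprehension set is read off \emph{arithmetically} from $f$. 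A genuine subtlety you miss entirely: the relation $R$ so constructed is not transitive even when the $R_n$ are, so one needs the lemma for reflexive relations, which the paper derives from the quasi-order version via a Pouzet-style well-founded order ${\leq'}\subseteq{\leq_Q}\subseteq R$ (Proposition~\ref{prop:pouzet}), using the open Ramsey theorem twice. Finally, your remark that one must show ``a minimal bad array cannot be produced by any weaker means'' misconstrues the reversal: no lower-bound argument is involved; the minimal array's values simply decide each instance $\varphi(n)$.
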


Let us now put our result into context. In his textbook on reverse mathematics, Simpson~\cite{simpson09} states ``that a great many ordinary mathematical theorems are provable in $\mathsf{ACA_0}$, and that of the exceptions, most are provable in $\Pi^1_1\textsf{-CA}_0$." In~\cite{marcone-bad-sequence} it is shown that $\Pi^1_1$-comprehension is equivalent to the minimal bad sequence lemma from the theory of well quasi orders. It may not be entirely unexpected that minimal bad arrays correspond to $\Pi^1_2$-comprehension, since A.~Marcone~\cite{marcone-bad-sequence} has shown that the notion of better quasi order is $\Pi^1_2$-complete. Nevertheless, Marcone himself has formulated the much weaker conjecture that the minimal bad array lemma is unprovable in~$\Pi^1_1\textsf{-CA}_0$ (see Problem~2.19 from~\cite{marcone-survey-old}). The strength of $\Pi^1_2$-comprehension dwarfs the one of $\Pi^1_1$-comprehension, as expressed by M.~Rathjen~\cite{rathjen-icm}. In their analysis of a theorem from topology, C.~Mummert and S.~Simpson~\cite{mummert-simpson} assert that they have found ``the first example of a theorem of core mathematics which is provable in second order arithmetic and implies $\Pi^1_2$~comprehension." So it seems justified to conclude that the minimal bad array lemma has exceptional axiomatic strength.

\section{From minimal bad arrays to $\Pi^1_2$-comprehension}

In this section, we derive $\Pi^1_2$-comprehension from Simpson's version of the minimal bad array lemma, which yields the first implication in Theorem~\ref{thm:main} from the introduction.

We will make crucial use of Marcone's result that the notion of better quasi order is $\Pi^1_2$-complete. While Marcone does not specify a base theory, it is straightforward to check that his proof yields the following.

\begin{proposition}[{\cite{marcone-Pi12-complete}}]\label{prop:Pi12-complete}
Consider a $\Pi^1_2$-formula~$\varphi(n)$ in the language of second order arithmetic, possibly with further parameters. The theory~$\mathsf{ACA_0}$ proves that, for any values of the parameters, there is a family of sets~$Q_n$ and reflexive binary relations~$R_n$ such that $\varphi(n)$ holds precisely if $R_n$ is a better relation on~$Q_n$.
\end{proposition}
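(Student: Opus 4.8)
The statement is the hardness half of Marcone's theorem that ``$R$ is a better relation on $Q$'' is $\Pi^1_2$-complete, so the plan is to recall his reduction and to check that each ingredient is arithmetical, hence available over $\mathsf{ACA_0}$. It is worth first locating the second-order content. Since arrays are presented as maps on blocks, a $Q$-array is coded by a single set, namely a base $V\in[\mathbb N]^\omega$, a block $B\subseteq[V]^{<\omega}$, and a map $f_0\colon B\to Q$. For such data the value $f(X)$ depends only on a finite initial segment of $X$, so being $R$-good, $\exists X\in[V]^\omega\,(f(X)\mathrel{R}f(X^-))$, is arithmetical in the code of $f$; by contrast, the requirement that $B$ genuinely be a block, $\forall X\in[V]^\omega\,\exists!\,s\in B\,(s\sqsubset X)$, is $\Pi^1_1$. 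Hence ``$R$ is a better relation'', i.e.\ $\forall f\,(\text{$f$ is an array}\to\text{$f$ is good})$, has the form $\forall f\,\Sigma^1_1(f)$ and is $\Pi^1_2$; more importantly, this analysis shows that a reduction must load the nontrivial content into the requirement that the domain be a genuine block, rather than into the relation.

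For the reduction I would first put $\varphi$ into a normal form. Working in $\mathsf{ACA_0}$, write $\varphi(n)$ as $\forall X\,\exists Y\,\theta(n,X,Y)$ with $\theta$ arithmetical, and pass to the tree representation of the inner $\Sigma^1_1$ matrix: there is a uniformly and arithmetically defined family of trees $T_{n,X}\subseteq\mathbb N^{<\omega}$ such that $\exists Y\,\theta(n,X,Y)$ holds exactly when $T_{n,X}$ is ill-founded. Then $\neg\varphi(n)$ becomes $\exists X\,(T_{n,X}\text{ is well-founded})$, and since ``$R_n$ is not a better relation'' means ``there is a bad $Q_n$-array'', it suffices to arrange that bad $Q_n$-arrays correspond exactly to reals $X$ for which $T_{n,X}$ is well-founded.

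The coding itself is the point where I would follow Marcone. The real $X$ is encoded into the \emph{domain} of the array: one builds, uniformly in $n$, a set $Q_n$ of finite approximations, a reflexive relation $R_n$ on it, and, for each $X$, a candidate array $f_X$ whose base-set $B_X\subseteq[\mathbb N]^{<\omega}$ is a genuine block \emph{precisely} when $T_{n,X}$ is well-founded. This is the classical correspondence between well-foundedness and the bar property: an infinite branch of $T_{n,X}$ (equivalently, a witness $Y$ for $\theta$) yields an infinite set with no initial segment in $B_X$, so that $B_X$ fails to be a block exactly when $T_{n,X}$ is ill-founded. The values of $f_X$ and the relation $R_n$ are then chosen so that, whenever $B_X$ is a genuine block, $f_X$ is $R_n$-bad, while any array over $Q_n$ \emph{not} arising in this way is $R_n$-good -- the relation is made generous enough to always supply a good pair in the latter case. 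With these properties in hand, a bad $Q_n$-array exists if and only if some $T_{n,X}$ is well-founded, i.e.\ if and only if $\neg\varphi(n)$; equivalently, $R_n$ is a better relation if and only if $\varphi(n)$. Since $R_n$ is reflexive by construction and transitivity is never invoked, one obtains exactly the reflexive relations promised in the statement.

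The main obstacle is not the combinatorics, which is Marcone's, but the bookkeeping needed to see that everything stays within $\mathsf{ACA_0}$. Concretely I would check: (a) the trees $T_{n,X}$ and the objects $Q_n$, $R_n$, $B_X$ are all given by arithmetical conditions in $n$ and the parameters, so arithmetical comprehension produces them as a uniform family; (b) the equivalence ``$B_X$ is a block $\leftrightarrow$ $T_{n,X}$ is well-founded'' is provable in $\mathsf{ACA_0}$, because each direction amounts to an arithmetical transformation between a branch of $T_{n,X}$ and an infinite set avoiding $B_X$, with no appeal to $\Pi^1_1$-comprehension; and (c) from an arbitrary bad $Q_n$-array one can, again arithmetically, read off a real $X$ and use the block property of its domain to certify that $T_{n,X}$ is well-founded, which is what rules out spurious bad arrays. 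The delicate step is (c): one must verify that $R_n$-badness really does pin down the encoded $X$ and transfers block-ness to well-foundedness. Granting Marcone's construction, each of these is routine, and the crucial enabling fact throughout is that arrays are countable objects, so all quantification over arrays and the extraction of witnesses can be handled with arithmetical comprehension alone.
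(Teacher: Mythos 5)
The paper offers no proof of this proposition at all: it is quoted directly from Marcone, with only the remark that it is ``straightforward to check'' that his construction goes through in $\mathsf{ACA_0}$, and your proposal does essentially the same thing in expanded form---a correct complexity analysis (goodness of an array coded on a block is arithmetical, so the $\Sigma^1_1$ content must indeed sit in the block condition on the domain), the outline of Marcone's reduction via trees $T_{n,X}$, and a checklist that each step uses only arithmetical comprehension. Since both you and the paper defer the core construction of $Q_n$ and $R_n$ to the cited reference, your attempt matches the paper's treatment.
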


Marcone also shows that the relations $R_n$ in the proposition can be turned into quasi orders, based on a construction of M.~Pouzet~\cite{pouzet-better-relation}. However, the latter makes use of $\Pi^1_1$-comprehension in the form of local minimal bad arrays. In any case, we will later construct a relation that is not transitive even when each $R_n$ is. To reduce to quasi orders, we will then use the following result. It strengthens the result by Pouzet, which only covers the case where $\leq'$ is the identity relation. Since the minimal bad array in~(ii) is given, we do not need a stronger base theory. 

\begin{proposition}[$\mathsf{ATR_0}$]\label{prop:pouzet}
When $\leq'$ is a partial ranking of a reflexive relation~$R$ on a set~$Q$, there is a well-founded partial order~$\leq_Q$ on~$Q$ that validates the following:
\begin{enumerate}[label=(\roman*)]
\item We have ${\leq'}\subseteq{\leq_Q}\subseteq R$.
\item For any $\leq_Q$-minimal~$\leq_Q$-bad array $f:[V]^\omega\to Q$, there is a set $W\in[V]^\omega$ such that the restriction of~$f$ to $[W]^\omega$ is $R$-bad.
\end{enumerate}
\end{proposition}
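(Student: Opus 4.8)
The plan is to build $\leq_Q$ from the partial ranking $\leq'$ by closing it up under the relation $R$ in a controlled way, so as to obtain a well-founded partial order that sits between $\leq'$ and $R$. The natural candidate is to define $q\leq_Q r$ to hold precisely when there is a finite chain $q=q_0,q_1,\ldots,q_k=r$ witnessing the compatibility of $\leq'$ with $R$. More concretely, I would declare $q\leq_Q r$ iff either $q\leq' r$, or there is some $r'$ with $q\leq' r'$ and $r'\,R\,r$ together with $r'\leq' r$ — the point being that the defining property of a partial ranking, namely that $p\,R\,q\leq' r$ entails $p\,R\,r$, should let me fold such chains back into a single $R$-step. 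I would then verify directly that ${\leq'}\subseteq{\leq_Q}\subseteq R$: the first inclusion is immediate, and the second uses reflexivity of $R$ (so that $q\leq' r$ gives $q\,R\,r$) combined with the ranking property. Establishing that $\leq_Q$ is a partial order requires checking antisymmetry and transitivity, and well-foundedness should be inherited from that of $\leq'$, perhaps after verifying that any infinite $\leq_Q$-descending chain would induce an infinite $\leq'$-descending chain.

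The substantive content is part~(ii), which asserts that minimality for the enlarged order $\leq_Q$ can be converted, by passing to a suitable sub-base $W\in[V]^\omega$, into genuine $R$-badness. Here I would argue by contradiction: suppose $f:[V]^\omega\to Q$ is $\leq_Q$-minimal $\leq_Q$-bad but that no restriction of $f$ to any $[W]^\omega$ is $R$-bad. The failure of $R$-badness on every sub-base means that, for each $W$, some $X\in[W]^\omega$ satisfies $f(X)\,R\,f(X^-)$. The idea is to use this abundance of $R$-good pairs to manufacture a strictly $\leq'$-smaller bad array, contradicting $\leq_Q$-minimality. Concretely, the $R$-witnesses together with the ranking property should allow me to define a new array $g$ with $g<'f$ on a suitable sub-base, where $g(X)$ is obtained by replacing the value $f(X)$ by some $\leq'$-smaller element that still blocks $\leq_Q$-goodness. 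To make this construction uniform across the block, I expect to invoke a Ramsey-type or continuity argument to extract a single sub-base $W$ on which the witnesses behave coherently, so that $g$ is itself a continuous $Q$-array.

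I anticipate the main obstacle to be precisely this step of extracting a \emph{single} coherent sub-base and defining the strictly $\leq'$-smaller array $g$ continuously. The relation $R$ need not be transitive, so the $R$-witnesses on different sub-bases cannot simply be composed, and the definition of $\leq_Q$ must be engineered so that an $R$-good pair for $f$ translates into a $\leq_Q$-good pair — which is exactly what minimality forbids — unless the values are pushed strictly down in the $\leq'$-ranking. The delicate point is that pushing down must be possible everywhere simultaneously and must respect continuity; this is where the well-foundedness of $\leq'$ is essential, since it guarantees that minimal $\leq'$-replacements exist, and where working within $\mathsf{ATR_0}$ matters, as the relevant combinatorial extraction (an instance of the open Ramsey theorem) is available at that level. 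I would organize the final argument so that the hypothesised absence of an $R$-bad restriction yields, via the ranking property applied along $R$-witnesses, an array $g\leq' f$ that is $\leq_Q$-bad and strictly below $f$ somewhere, and then argue that a genuinely strict drop on a sub-base contradicts $\leq_Q$-minimality of $f$.
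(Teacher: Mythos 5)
There is a genuine gap, and it starts with your definition of $\leq_Q$. As literally stated --- $q\leq_Q r$ iff $q\leq' r$, or there is $r'$ with $q\leq' r'$, $r'\,R\,r$ and $r'\leq' r$ --- the second clause collapses into the first by transitivity of $\leq'$, so your relation is just $\leq'$ and part~(ii) fails outright (take $R$ to be a better relation strictly larger than a non-better $\leq'$... more simply, nothing connects $\leq_Q$-badness to $R$-badness). If instead you meant the one-step closure $q\leq' r'\,R\,r$, then antisymmetry, transitivity and well-foundedness all become problematic, and, more importantly, the definition still contains no mechanism that forces $R$-comparisons between values of a minimal bad array to become $\leq_Q$-comparisons, which is the whole content of~(ii). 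The paper's construction (following Pouzet) is essentially different: one extends $\leq'$ to a bijection $o:Q\to\alpha$ onto a well order with $p\leq'q\Rightarrow o(p)\preceq o(q)$ and defines $p<_Qq$ by recursion on $o(p)$, demanding $o(p)\prec o(q)$, $p\,R\,q$, \emph{and} that every $<_Q$-predecessor of $p$ is a $<_Q$-predecessor of $q$. This hereditary predecessor clause is the engine of the proof, and your proposal has no substitute for it.

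Your strategy for~(ii) is also unworkable as described: from an $R$-good pair $f(X)\,R\,f(X^-)$ you have no way to produce $\leq'$-smaller replacement values --- well-foundedness of $\leq'$ guarantees that descending chains terminate, not that nontrivial drops exist below $f(X)$ --- so the ``strictly $\leq'$-smaller bad array $g$'' cannot be manufactured. The paper uses minimality in a different place: it forms the powerset array $h(X)=\{r\in Q\mid r<_Qf(X)\}$ with the induced order on $\mathcal P(Q)$ and observes that no restriction of $h$ can be bad, since a bad restriction would let one select $g(X)\in h(X)$ giving a $\leq_Q$-bad array $g<_Qf$, contradicting minimality. Two applications of the open Ramsey theorem (first to arrange $o(f(X))\preceq o(f(X^-))$, then to find $W$ with $h(X)\leq_{\mathcal P(Q)}h(X^-)$ on $[W]^\omega$) make the $<_Q$-predecessor sets non-increasing along the array; together with the hereditary clause in the definition of $<_Q$, this shows that $f(X)\,R\,f(X^-)$ and $f(X)\leq_Qf(X^-)$ are equivalent for $X\in[W]^\omega$, so $\leq_Q$-badness of $f$ transfers directly to $R$-badness on $[W]^\omega$ --- no smaller array is constructed at that stage at all. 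To repair your proposal you would need both the recursive definition with the predecessor-containment clause and this powerset-array argument; the closure idea and the ``push values down'' contradiction do not converge to a proof.
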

\begin{proof}
Given that partial rankings are well-founded, we find a well order~$(\alpha,\preceq)$ and a bijection $o:Q\to\alpha$ such that $p\leq'q$ entails $o(p)\preceq o(q)$. Let us note that the original proof by Pouzet, in which~$\leq'$ is the identity relation, is based on an arbitrary enumeration with~$\alpha=\omega$. Following Pouzet, we stipulate
\begin{equation*}
p<_Qq\quad:\Leftrightarrow\quad\begin{cases}
\text{we have $o(p)\prec o(q)$ and $p\,R\,q$}\\
\text{and $r<_Q p$ implies $r<_Q q$ for all $r\in Q$,}
\end{cases}
\end{equation*}
which amounts to a recursion over~$o(p)$. It is immediate that this yields a well-founded partial order that is contained in~$R$. Aiming at~(i), we assume $p<'q$. Given that $<'$ is a partial ranking of~$R$, we have $p\,R\,q$ and also $o(p)\prec o(q)$. To~get~$p<_Q q$, we show that $r<_Q p$ entails $r<_Q q$. We use induction on~$o(r)$. Given $r<_Q p$, we have $o(r)\prec o(p)\prec o(q)$ as well as $r\, R\,p\,<'q$. Crucially, the latter entails $r\,R\,q$, due to our definition of partial ranking for relations that are not quasi orders. For $r'<_Q r<_Q p$ we get $r'<_Q q$ by induction hypothesis, as needed to conclude~$r<_Q q$. Let us now consider $f:[V]^\omega\to Q$ as in~(ii). By the open Ramsey theorem, we first find $V'\in[V]^\omega$ with $o(f(X))\preceq o(f(X^-))$ for all $X\in[V']^\omega$ (see Theorem~4.9 in conjunction with Lemma~3.1 of~\cite{marcone-survey-old}). Consider the power\-set~$\mathcal P(Q)$ with the order that is induced by~$\leq_Q$ as in the introduction. We look at the array
\begin{equation*}
h:[V']^\omega\to\mathcal P(Q)\quad\text{with}\quad h(X)=\{r\in Q\,|\,r<_Q f(X)\}.
\end{equation*}
Let us note that the sets $h(X)$ are finite when we have $\alpha=\omega$, as in the original proof by Pouzet. For $W\in[V']^\omega$, the restriction of~$h$ to~$[W]^\omega$ cannot be bad. If~it was, we could choose values $g(X)\in h(X)$ with $g(X)\not\leq_Q r$ for all~$r\in h(X^-)$, by the definition of the order on~$\mathcal P(Q)$. In view of $g(X^-)\in h(X^-)$, the resulting array~$g:[W]^\omega\to Q$ would be $\leq_Q$-bad. But we would also have $g(X)<_Q f(X)$ due to $g(X)\in h(X)$, against the minimality of $f$. Again by the open Ramsey theorem, we thus find a $W\in[V']^\omega$ such that $h(X)\leq_{\mathcal P(Q)} h(X^-)$ holds for all~$X\in[W]^\omega$. Given $r<_Q f(X)$ with $X\in[W]^\omega$, we can now argue that $r\in h(X)$ yields $r\leq_Q r'$ for some $r'\in h(X^-)$, where we have $r'<_Q f(X^-)$ and hence $r<_Q f(X^-)$. We~can conclude that $f(X)\leq_Q f(X^-)$ is equivalent to $f(X)\,R\,f(X^-)$ for~$X\in[W]^\omega$. Given that $f$ is $\leq_Q$-bad, it must thus be $R$-bad on~$[W]^\omega$, as desired.
\end{proof}

The following corollary shows that the minimal bad array lemma for quasi orders entails a version of the same lemma for reflexive relations.

\begin{corollary}[$\mathsf{ATR_0}$]
Assume Simpson's version of the minimal bad array lemma for quasi orders. Consider a partial ranking~$\leq'$ of a reflexive relation~$R$ on a set~$Q$. If~$R$ is no better relation on~$Q$, there is a $Q$-array that is $\leq'$-minimal $R$-bad.
\end{corollary}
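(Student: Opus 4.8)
The plan is to reduce everything to the quasi-order case by means of Proposition~\ref{prop:pouzet}. Since $R$ is no better relation on~$Q$, I would first fix an $R$-bad $Q$-array~$f_0$. Applying Proposition~\ref{prop:pouzet} to the partial ranking~$\leq'$ of~$R$ yields a well-founded partial order~$\leq_Q$ with ${\leq'}\subseteq{\leq_Q}\subseteq R$ that satisfies clause~(ii). The key observation is that $\leq_Q$ is a partial ranking of the quasi order $(Q,\leq_Q)$ in its own right: it is a well-founded partial order by construction, and the defining condition of a partial ranking of $R={\leq_Q}$, namely that $p\,{\leq_Q}\,q$ and $q\,{\leq_Q}\,r$ entail $p\,{\leq_Q}\,r$, is just transitivity of~$\leq_Q$. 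Hence $(Q,\leq_Q)$ together with the ranking~$\leq_Q$ is a legitimate input for Simpson's version of the minimal bad array lemma for quasi orders.

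Next I would feed this input to the assumed lemma. Because ${\leq_Q}\subseteq R$, every $R$-bad array is $\leq_Q$-bad; in particular $f_0$ is $\leq_Q$-bad. Simpson's version for quasi orders thus produces a $Q$-array~$f\leq_Q f_0$, say $f:[V]^\omega\to Q$, that is $\leq_Q$-minimal $\leq_Q$-bad. I would then invoke clause~(ii) of Proposition~\ref{prop:pouzet} for precisely this $\leq_Q$-minimal $\leq_Q$-bad~$f$, obtaining some $W\in[V]^\omega$ such that the restriction $g:=f\restriction[W]^\omega$ is $R$-bad. This~$g$ is the candidate for a $\leq'$-minimal $R$-bad array.

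The main point, and the only step where care is needed, is to verify that $g$ is genuinely $\leq'$-minimal $R$-bad rather than merely $R$-bad; this is where one must check that minimality survives the passage to the restriction. Suppose towards a contradiction that some $R$-bad $Q$-array~$h$ satisfies $h<'g$. Then the base of~$h$ is contained in $W\subseteq V$, and since ${\leq'}\subseteq{\leq_Q}$ the strict inequalities $h(X)<'g(X)$ give $h(X)<_Q g(X)=f(X)$ on the base of~$h$, so that $h<_Q f$. Moreover $h$ is $\leq_Q$-bad, again because ${\leq_Q}\subseteq R$ turns $R$-badness into $\leq_Q$-badness. But then $h$ is a $\leq_Q$-bad array with $h<_Q f$, contradicting the $\leq_Q$-minimality of~$f$. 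Therefore no such~$h$ exists, and $g$ is $\leq'$-minimal $R$-bad. Every step is available in~$\mathsf{ATR_0}$, since Proposition~\ref{prop:pouzet} is proved there and Simpson's version for quasi orders is assumed, which establishes the corollary.
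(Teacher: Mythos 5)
Your proof is correct and follows essentially the same route as the paper's: apply Proposition~\ref{prop:pouzet} to obtain~$\leq_Q$, use Simpson's version with $\leq_Q$ serving as a partial ranking of itself to get a $\leq_Q$-minimal $\leq_Q$-bad array, restrict via clause~(ii) to an $R$-bad array, and transfer minimality using ${\leq'}\subseteq{\leq_Q}\subseteq R$. You merely make explicit two steps the paper leaves implicit, namely that transitivity and well-foundedness make $\leq_Q$ a legitimate ranking of itself and that strict $<'$-inequalities pass to strict $<_Q$-inequalities.
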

\begin{proof}
Consider~${\leq_Q}\subseteq R$ as in the previous proposition. Given that $R$ is no better relation, we get a $\leq_Q$-bad array $f_0$. The minimal bad array lemma for quasi orders yields an array $f'\leq_Q f_0$ that is $\leq_Q$-minimal $\leq_Q$-bad. By the previous proposition, we can restrict~$f'$ to an array~$f$ that is $R$-bad. The latter must indeed be $\leq'$-minimal $R$-bad, as an $R$-bad $g<'f$ would be $\leq_Q$-bad with $g<_Q f\leq_Q f'$.
\end{proof}

Let us note that the previous proof yields $f\leq_Q f_0$ but not necessarily $f\leq' f_0$, so that we obtain an apparently weaker version of the minimal bad array lemma for reflexive relations. A direct proof of the full version is given in the next section. We now deduce the first direction of our main result.

\begin{theorem}[$\mathsf{ATR_0}$]\label{thm:mba-to-Pi12CA}
The principle of $\Pi^1_2$-comprehension follows from Simpson's version of the minimal bad array lemma.
\end{theorem}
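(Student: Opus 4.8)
The plan is to route everything through Marcone's result that betterness is $\Pi^1_2$-complete (Proposition~\ref{prop:Pi12-complete}) and then let a \emph{single} minimal bad array decide a whole uniform family of instances at once. Concretely, I would fix a $\Pi^1_2$-formula $\varphi(n)$ with parameters and, working inside $\mathsf{ATR_0}$ (which extends $\mathsf{ACA_0}$), invoke Proposition~\ref{prop:Pi12-complete} to obtain a uniform family of sets $Q_n$ and reflexive relations $R_n$ with $\varphi(n)$ holding iff $R_n$ is a better relation on $Q_n$. Then forming $\{n : \varphi(n)\}$ is the same as forming $\{n : R_n \text{ is a better relation}\}$, and $\neg\varphi(n)$ is equivalent to the existence of an $R_n$-bad array; so it suffices to decide, uniformly in $n$, whether $R_n$ admits a bad array.

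The core difficulty is that the minimal bad array lemma is only a \emph{transformation} principle: it turns a given bad array into a minimal one, but does not by itself certify whether any bad array exists. To harness it for comprehension I would assemble the entire family into one reflexive relation $R$ on a set $Q$, together with a well-founded partial ranking $\leq'$, designed so that (a) $R$ is provably not a better relation regardless of the truth values of the $\varphi(n)$ — arranged by building in a self-contained bad array, for instance a spine on which $R$ restricts to the reverse ordering of $\mathbb N$, whose array $X\mapsto\min X$ is patently $R$-bad; and (b) $R$ restricts to $R_n$ on an $n$-th component $Q_n$, with the cross-component part of $R$ and the ranking $\leq'$ set up so that a bad array localising to the $n$-th component is $\leq'$-strictly below the corresponding spine behaviour, and such a localisation is possible exactly when $R_n$ is itself non-better. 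As the introduction anticipates, this forces $R$ to be non-transitive even when every $R_n$ is transitive, so I would apply the corollary above — minimal bad arrays for reflexive relations, derived from Simpson's version through Proposition~\ref{prop:pouzet} — rather than the quasi-order formulation directly.

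Feeding $R$ and $\leq'$ into that corollary produces a $\leq'$-minimal $R$-bad array $f$, and the payoff of minimality is that it pins $f$ to a canonical shape. After thinning $f$ to a sub-cube by the open Ramsey theorem, exactly as in the proof of Proposition~\ref{prop:pouzet}, at each index $n$ the array either drops into the $n$-th component and there realises a genuine $R_n$-bad array, or remains on the higher-ranked spine value. Were $R_n$ non-better while $f$ avoided the cheaper component realisation, one could splice an $R_n$-bad array into $f$ to obtain an $R$-bad array $\leq'$-strictly below it, contradicting minimality; conversely, if $R_n$ is better no such descent exists and $f$ must keep the spine value. Each alternative is arithmetical in $f$ together with the family, so $\{n : \varphi(n)\}$ — equivalently the set of $n$ at which $f$ retains the spine value — exists by the arithmetical comprehension available in $\mathsf{ATR_0}$, which is exactly $\Pi^1_2$-comprehension for $\varphi$.

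I expect the main obstacle to be the precise definition and verification of the pair $(R,\leq')$. One must keep $R$ reflexive and unconditionally non-better, confirm that $\leq'$ is a genuine partial ranking — well-founded and satisfying $p\,R\,q\leq' r \Rightarrow p\,R\,r$ in spite of the deliberate non-transitivity of $R$ — and, most delicately, establish the two-way equivalence between the component behaviour of the minimal array at $n$ and the non-betterness of $R_n$. In particular, making the splicing step legitimate, so that a local $R_n$-bad array really produces a global $R$-bad array that is $\leq'$-strictly smaller than $f$, is where the non-transitive cross-component relation has to be calibrated with care, and it is the step I would scrutinise most closely.
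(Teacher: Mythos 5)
Your architecture coincides with the paper's: Marcone's $\Pi^1_2$-completeness result (Proposition~\ref{prop:Pi12-complete}), a single reflexive relation assembled from the family $(Q_n,R_n)$ with an unconditionally bad reverse-$\omega$ ``spine'', a partial ranking placing component values strictly below spine values, passage through the corollary to Proposition~\ref{prop:pouzet} because the combined relation is non-transitive, a per-index dichotomy for the resulting $\leq'$-minimal bad array with a splicing argument against minimality, and arithmetical comprehension to finish. But there is a genuine gap: the pair $(Q,R,\leq')$ --- which you yourself flag as the main obstacle --- is never constructed, and this construction is the substantive content of the theorem. The difficulty is not mere calibration of cross-component edges: a \emph{single} bad array must exhibit, at every index $n$ simultaneously, a well-defined ``component versus spine'' behaviour, and no disjoint union of the sets $Q_n\cup\omega^*$ provides such simultaneous indexing. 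The paper's mechanism is to let $Q$ consist of finite sequences $\sigma$ with $\sigma_i\in Q_i^*=Q_i\cup\omega^*$ and to declare $\sigma\,R\,\tau$ iff $l(\sigma)\geq l(\tau)$ or $\sigma_i\,R_i^*\,\tau_i$ for some $i<l(\sigma)<l(\tau)$; the length clause makes $R$ reflexive and non-transitive, yields the unconditional bad array $X\mapsto\langle\min X,\ldots,\min X\rangle$ of length $\min(X)+1$, and $\leq'$ compares only sequences of equal length, coordinatewise with $Q_i$ below $\omega^*$. Nothing in your sketch produces this (or an equivalent) structure, so the dichotomy you assert ``at each index $n$'' is not yet meaningful.

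Relatedly, your appeal to open Ramsey thinning ``exactly as in the proof of Proposition~\ref{prop:pouzet}'' misidentifies where the dichotomy comes from: the paper's proof of this theorem uses no Ramsey argument. Instead, badness of the minimal array $f:B\to Q$ is exploited along the relation $s\vartriangleleft t$ on the block: failure of $f(r^j)\,R\,f(r^{j+1})$ forces $l(f(r^j))<l(f(r^{j+1}))$ and propagates $f(r^j)_i\in Q_i$ to $f(r^{j+1})_i\in Q_i$ together with failure of $R_i$, so a single value $f(s)_i\in Q_i$ already yields an $R_i$-bad array on the tail block $B/s$; dually, a $\vartriangleleft$-chain of length $i$ guarantees $i<l(f(X))$ on a tail base, which is exactly what makes the splice (replacing coordinate $i$ of $f(X)$ by $h(X)$ for an $R_i$-bad $h$) well defined and $<'$-strictly below $f$. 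This length bookkeeping is precisely the step you defer, and without it neither direction of the equivalence between betterness of $R_i$ and the spine behaviour of $f$ at $i$ can be verified. In short: the strategy is the paper's and your desiderata are in fact satisfiable, but as submitted the proof is incomplete at its mathematical core.
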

\begin{proof}
    Given a $\Pi^1_2$-formula $\varphi(n)$, possibly with parameters, we consider sets $Q_n$ and reflexive relations~$R_n$ as in Proposition~\ref{prop:Pi12-complete}. We need to form the set
    \begin{equation*}
        X=\{n\in\mathbb N\,|\,R_n\text{ is a better relation on }Q_n\}.
    \end{equation*}
    Let $\omega^*$ be the order with underlying set $\mathbb N$ and order relation~${\leq^*}=\{(m,n)\,|\,m\geq n\}$. For each $n$, we consider the disjoint union $Q_n^*=Q_n\cup\omega^*$ with the relation
    \begin{equation*}
    R_n^*=R_n\cup{\leq^*}\cup\{(q,m)\,|\,q\in Q_n\text{ and }m\in\omega^*\}.
    \end{equation*} 
    Now define $Q$ as the set of sequences $\sigma=\langle\sigma_0,\ldots,\sigma_{l(\sigma)-1}\rangle$ with length $l(\sigma)$ and entries $\sigma_i\in Q_i^*$. To obtain a reflexive relation~$R$ on $Q$, we stipulate
    \begin{equation*}
        \sigma\,R\,\tau\quad\Leftrightarrow\quad\text{either $l(\sigma)\geq l(\tau)$ or there is $i<l(\sigma)<l(\tau)$ with $\sigma_i\,R_i^*\tau_i$}.
    \end{equation*}
    Then~$R$ is no better relation. Indeed, we obtain $R_i^*$-bad arrays $f_i:[\mathbb N]^\omega\to Q_i^*$ by setting $f_i(X)=\min(X)\in\omega^*$. An $R$-bad $Q$-array is given by
    \begin{equation*}
     X\mapsto\langle f_0(X),\ldots,f_n(X)\rangle\quad\text{for}\quad n=\min(X).
    \end{equation*}
For $\sigma,\tau\in Q$ we declare that $\sigma\leq'\tau$ holds if we have $l(\sigma)=l(\tau)$ and either $\sigma_i=\tau_i$ or $\sigma_i\in Q_i$ and $\tau_i\in\omega^*$ for each~$i<l(\sigma)$. It is straightforward to see that this yields a partial ranking of~$R$. In view of the previous corollary, we may now consider an array $f:[V]^\omega\to Q$ that is $\leq'$-minimal~$R$-bad. As noted in the introduction, we can identify this array with a map $f:B\to Q$ on a block~$B$ with base~$V$. For $s,t\in B$, one writes $s\vartriangleleft t$ if there is an $X\in[V]^\omega$ with $s\sqsubset X$ and $t\sqsubset X^-$. That $f$ is bad means that $f(s)\,R\,f(t)$ fails for any~$s\vartriangleleft t$. We show
\begin{equation*}
\text{$R_i$ is a better relation on~$Q_i$}\quad\Leftrightarrow\quad\text{$f(s)_i\in\omega^*$ for all $s\in B$ with $i<l(f(s))$}.
\end{equation*}
Here $f(s)_i$ refers to the $i$-th entry of the sequence $f(s)\in Q$. Once the equivalence is established, the aforementioned set~$X$ can be formed by arithmetic comprehension. Let us first assume that $R_i$ is a better relation on~$Q_i$. Towards a contradiction, we assume that $f(s)_i\in Q_i$ holds for some~$s\in B$ with $i<l(f(s))$. Let $B/s$ be the block of all $t\in B$ such that the largest number in~$s$ lies strictly below the smallest number in~$t$. Given $t\in B/s$, we find $r^i\in B$ with $s=r^0\vartriangleleft\ldots\vartriangleleft r^n=t$ (consider some~$X\in[V]^\omega$ with $s\cup t\sqsubset X$). Since $f(r^j)\,R\,f(r^{j+1})$ must fail for all~$j<n$, we obtain $l(f(r^j))<l(f(r^{j+1}))$ and inductively $f(r^j)_i\in Q_i$. Hence we have $i<l(f(s))<l(f(t))$ and $f(t)_i\in Q_i$. But then the array $B/s\ni t\mapsto f(t)_i\in Q_i$ is $R_i$-bad, against our assumption. For the other direction of our equivalence, we assume $f(s)_i\in\omega^*$ holds for all $s\in B$ with $i<l(f(s))$. Pick $r^0\vartriangleleft\ldots\vartriangleleft r^i$ in~$B$. As~above, we in\-duc\-tively get $j\leq l(f(r^j))$ and hence $i<l(f(s))$ for all~$s\in B/r^i$. In other words, if~$V'\in[V]^\omega$ is the base of $B/r^i$, we have $i<l(f(X))$ and $f(X)_i\in\omega^*$ for all~$X\in[V']^\omega$. Towards a contradiction, we now assume that $R_i$ is not better. Let $h:[W]^\omega\to Q_i$ be an $R_i$-bad array. We may assume~$W\subseteq V'$. In order to define $g:[W]^\omega\to Q$, we stipulate that we have $l(g(X))=l(f(X))$ as well as
\begin{equation*}
g(X)_j=\begin{cases}
 h(X) & \text{if $j=i$},\\
 f(X)_j & \text{if $i\neq j<l(g(X))$}.
\end{cases}
\end{equation*}
This yields an $R$-bad array $g<'f$, against minimality.
\end{proof}

\section{From $\Pi^1_2$-comprehension to minimal bad arrays}\label{sect:mba-from-Pi12CA}

In this section, we first show that Simpson's version of the minimal bad array lemma follows from another version that has been isolated by Laver~\cite{laver-min-array}. We then prove that Laver's version is a consequence of $\Pi^1_2$-comprehension. Together with Theorem~\ref{thm:mba-to-Pi12CA}, this completes the proof of Theorem~\ref{thm:main} from the introduction.

So far, we have mostly viewed arrays as continuous functions with domain $[V]^\omega$ for some infinite $V\subseteq\mathbb N$, even though it was agreed that these were represented by functions on blocks. In other words, while different blocks can be used to represent the same continuous function, most previous considerations did not depend on the choice of representative. This is in marked contrast with the following, where the blocks on which arrays are defined form a crucial part of the data. We note that the following definitions are taken from~\cite{laver-min-array}, where $\ledot$ is written $\mathrel{\text{\scalebox{1.2}{$\sqsupseteq$}}}$.

\begin{definition}\label{def:lessdot}
For blocks $B$ and~$C$, we write $B\ledot C$ if we have $\base{B}\subseteq\base{C}$ and each $t\in B$ admits an $s\in C$ with $s\sqsubseteq t$. If this holds and we have $B\not\subseteq C$, then we write $B\lessdot C$. Given arrays $f:B\to Q$ and $g:C\to Q$ as well as a partial order~$\leq'$ on the set~$Q$, we write $f\ledot' g$ if we have $B\ledot C$ as well as $f(t)=g(t)$ for $t\in B\cap C$ and $f(t)<'g(s)$ for $C\ni s\sqsubset t\in B$. If we additionally have $B\lessdot C$, we write $f\lessdot' g$.
\end{definition}

It is immediate that $\ledot$ is transitive on blocks and not hard to derive that $\ledot'$ is transitive on arrays. One can also show that $B\lessdot D$ follows from $B\lessdot C\ledot D$ (but not from $B\ledot C\lessdot D$), which yields the analogous fact for arrays.

\begin{min-bad-arr-lem}[`Laver's version']
Consider a partial ranking $\leq'$ of a reflexive relation~$R$ on a set~$Q$. For any $R$-bad $Q$-array $f_0$ (given as a function on a block), there is an $R$-bad $Q$-array~$f\ledot' f_0$ that admits no $R$-bad $Q$-array $g\lessdot' f$.
\end{min-bad-arr-lem}

The given version does indeed coincide with the one by Laver~\cite{laver-min-array}, except that the latter requires $R$ to be a quasi order and works with barriers at the place of blocks. To see that our formulation in terms of blocks entails the one in terms of barriers, we recall that any block~$B$ admits a barrier $B'\subseteq B$, provably in~$\mathsf{ATR}_0$ (see~\cite{marcone-bad-sequence} and compare~\cite{cholak-RM-wpo} for a related result over the much weaker theory~$\mathsf{WKL}_0$). Given a minimal bad array $f\ledot' f_0$ on a block, we thus find a restriction~$f'\ledot' f$ that is defined on a barrier. By the observations above, we get $f'\ledot' f_0$ while a bad~$g\lessdot' f'$ would validate $g\lessdot' f$, against the assumption that $f$ is minimal. Similarly, the following proposition remains valid when Laver's version of the minimal bad array lemma is formulated in terms of barriers rather than blocks. Once we have proved that $\Pi^1_2$-comprehension entails Laver's version for blocks, the resulting circle of implications will ensure that the versions for blocks and barriers are equivalent. In the same way, we learn that the version for quasi orders is equivalent to the one for general reflexive relations. Let us note that the base theory $\mathsf{RCA_0}$ suffices to get the following result for blocks. We have chosen a stronger base theory to accommodate the aforementioned connection with barriers.

\begin{proposition}[$\mathsf{ATR}_0$]\label{prop:laver-to-simpson}
Laver's version of the minimal bad array lemma entails Simpson's version.
\end{proposition}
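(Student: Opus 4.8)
The plan is to apply Laver's version to the relation $R={\leq_Q}$, which is reflexive since $Q$ is a quasi order, and then to check that the array it produces already witnesses Simpson's version once we pass from the block presentation to the induced continuous function. So, given a $\leq_Q$-bad array $f_0$ on a block $C_0$, Laver's version yields a $\leq_Q$-bad array $f$ on a block $B$ with $f\ledot' f_0$ and no $\leq_Q$-bad $g\lessdot' f$. Viewing $f$ and $f_0$ as continuous functions with bases $\bigcup B\subseteq\bigcup C_0$, I would first read off $f\leq' f_0$ in Simpson's sense: for $X\in[\bigcup B]^\omega$ with initial segment $t\in B$, the corresponding $f_0$-segment $s\in C_0$ satisfies $s\sqsubseteq t$, and the two clauses of $f\ledot' f_0$ give either $f(X)=f(t)=f_0(t)=f_0(X)$ (when $s=t$) or $f(X)=f(t)<'f_0(s)=f_0(X)$ (when $s\sqsubset t$); either way $f(X)\leq' f_0(X)$.

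The substantive step is to show that Laver-minimality of $f$ forces $f$ to be $\leq'$-minimal $\leq_Q$-bad in Simpson's sense. I would argue contrapositively: suppose there were a $\leq_Q$-bad array $h<'f$, that is, $h$ has base $U\subseteq\bigcup B$ and $h(X)<'f(X)$ for all $X\in[U]^\omega$. The goal is to manufacture from $h$ a $\leq_Q$-bad array $g\lessdot' f$, contradicting the choice of $f$. The obvious attempt---take the common refinement of the blocks of $h$ and $f$---fails: on any element shared with $B$ the relation $\ledot'$ demands that $g$ agree with $f$, whereas $h$ lies strictly below $f$ everywhere. This clash between the forced equality clause of $\ledot'$ and the strictness of $h<'f$ is the main obstacle.

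The resolution is to choose a block for $g$ that is disjoint from $B$, so that the equality clause becomes vacuous. Concretely, writing $t_X\in B$ and $c_X\in C_h$ for the initial segments of $X\in[U]^\omega$ in the block $B$ of $f$ and the block $C_h$ of $h$, I would let $d_X$ be the shortest initial segment of $X$ that properly extends $t_X$ and also extends $c_X$, and put $D=\{d_X:X\in[U]^\omega\}$ with $g(d_X)=h(X)$. A routine check shows that $D$ is a block with $\bigcup D=U$, that $g$ induces the same continuous function as $h$ (hence is $\leq_Q$-bad), and that $D\cap B=\varnothing$: each $d_X$ lies strictly above $t_X$ and strictly below $X$, so membership in $B$ would give a second initial segment of $X$ in $B$, contradicting uniqueness. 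Consequently $D\lessdot B$, the equality clause of $\ledot'$ is empty, and the unique $s\in B$ with $s\sqsubset d_X$ is $t_X$, for which $g(d_X)=h(X)<'f(X)=f(t_X)$. Thus $g\lessdot' f$ is $\leq_Q$-bad, the desired contradiction, so $f$ is $\leq'$-minimal $\leq_Q$-bad and $f\leq' f_0$. All constructions here are elementary in the given data, so the argument goes through already in $\mathsf{RCA}_0$; we phrase it over $\mathsf{ATR}_0$ only to match the surrounding transfer between blocks and barriers.
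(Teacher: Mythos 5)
Your proof is correct and takes essentially the same route as the paper: once your per\nobreakdash-$X$ definition of $d_X$ is unwound, your block $D=\{d_X\,:\,X\in[U]^\omega\}$ is exactly the paper's refined block $C'=\{t\in C_h\,:\,\text{there is }s\in B\text{ with }s\sqsubset t\}\cup\{s^\frown n\,:\,s\in B\text{ with a $C_h$-segment }s\sqsupseteq t'\in C_h,\ n\in U/s\}$, and both arguments finish identically by noting that the reindexed array induces the same continuous function as $h$ (hence is bad) and satisfies the strict clause $g(d_X)=h(X)<'f(X)=f(t_X)$ vacuously avoiding the equality clause, so that $g\lessdot' f$ contradicts Laver-minimality. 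The only cosmetic difference is that the paper writes out the explicit arithmetical description of the block, which makes its existence in $\mathsf{RCA}_0$ immediate, whereas you leave that unwinding as a routine check.
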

\begin{proof}  
Given a bad $Q$-array~$f_0$, Laver's version yields a bad $f\ledot' f_0$ that admits no bad~$g\lessdot' f$.  One readily derives $f\leq' f_0$ according to Definition~\ref{def:minimal-bad}. To conclude that Simpson's version holds, we assume that $g<' f$ is bad and derive a contradiction. Write $f:B\to Q$ and $g:C\to Q$ for blocks $B$ and~$C$, where we have $\bigcup C\subseteq\bigcup B$. Let us note that we do not immediately get the contradictory $g\lessdot' f$, unless each~$t\in C$ admits an $s\in B$ with $s\sqsubset t$. For $X\subseteq\mathbb N$ and $s\in[\mathbb N]^{<\omega}$, let $X/s$ be the set of all numbers in $X$ that are larger than all elements of~$s$. We write $s^\frown n=s\cup\{n\}$ when we have $n\in\mathbb N/s$. Let us now put
\begin{align*}
B_0&=\{s\in B\,|\,\text{there is $t\in C$ with $t\sqsubseteq s\subset\textstyle\bigcup C$}\},\\
C'&=\{t\in C\,|\,\text{there is $s\in B$ with $s\sqsubset t$}\}\cup\{s^\frown n\,|\,s\in B_0\text{ and }n\in{\textstyle\bigcup C}/s\}.
\end{align*}
It is straightforward to see that $C'$ is a block with $\bigcup C'=\bigcup C$. We consider
\begin{equation*}
g':C'\to Q\quad\text{with}\quad g'(t)=\begin{cases}
g(t) & \text{when there is $s\in B$ with $s\sqsubset t$},\\
g(t') & \text{when $t=s^\frown n$ with $C\ni t'\sqsubseteq s$}.
\end{cases}
\end{equation*}
Let us note that $g$ and $g'$ induce the same continuous function from~$[{\bigcup C}]^\omega$ to~$Q$. In particular, $g'$ is bad. To reach the desired contradiction, we verify $g'\lessdot' f$. First note that we have $C'\lessdot B$ as well as $B\cap C'=\emptyset$. Given $B\ni s\sqsubset t\in C'$, we pick a set $X\in[{\bigcup C}]^\omega$ with $t\sqsubset X$. By considering the induced continuous functions, we see that $g<' f$~entails
\begin{equation*}
g'(t)=g'(X)=g(X)<'f(X)=f(s),
\end{equation*}
just as Definition~\ref{def:lessdot} requires.
\end{proof}

In the rest of this section, we show that $\Pi^1_2$-comprehension suffices to carry out the proof of the minimal bad array lemma that has been given by R.~Fra\"iss\'e~\cite{fraisse-theory-relations}.

\begin{definition}
    Let $B$ and $C$ be blocks with $\barrsucc{C}{B}$. We put
    \begin{align*}
        E(C,B,n)&=\{t\in B\,|\,t\subseteq[0,n]\cup \base C\text{ but }t\not\subseteq\base C\}\quad\text{for $n\in\mathbb N$},\\
        M(C,B)&=\{n\in\mathbb N\,|\,\text{there is }t\in B\text{ with }t\subseteq \base C\text{ but }t\not\in C\text{ and }\max t=n\}.
    \end{align*}
    If we have $M(C,B)\neq\emptyset$ or equivalently $C\lessdot B$, we put $m(C,B)=\min M(C,B)$.
\end{definition}

The following basic construction will be needed below. Let us point out that we have $C\cap E(C,B,n)=\emptyset$, as $t\in E(C,B,n)$ entails $t\not\subseteq\bigcup C$ and hence~$t\notin C$. The given proof is very close to the one by Fra{\"i}ss\'e~\cite{fraisse-theory-relations}, except that we work with blocks rather than barriers. We have included it in order to keep our paper self-contained.

\begin{lemma}[$\mathsf{RCA_0}$]
    Assume that $C\lessdot B$ are blocks and that we have $n\leq \minbarr{C}{B}$. Then $D:=C\cup \extbarr{C}{B}{n}$ is a block with $\barrpropsucc{\barrsucc{C}{D}}{B}$ and $\minbarr{C}{B}=\minbarr{D}{B}$.
\end{lemma}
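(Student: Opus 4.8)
The plan is to verify in turn that $D$ is a block, that $\barrsucc{C}{D}$ and $\barrpropsucc{D}{B}$ hold, and that $\minbarr{C}{B}=\minbarr{D}{B}$. Throughout I will freely use that a block is an antichain for~$\sqsubset$ (two comparable members $s\sqsubset t$ of a block would both be initial segments of any~$X$ extending~$t$, contradicting uniqueness) and that $\extbarr{C}{B}{n}\subseteq B$ is disjoint from~$C$, as noted before the lemma. The one place where the hypothesis $n\leq\minbarr{C}{B}$ enters is through the following consequence, which I would record at the outset: any $t\in B$ with $t\subseteq\base{C}$ and $\max t<n$ already lies in~$C$, since otherwise $\max t\in\diffbarr{C}{B}$ would give $\max t\geq\minbarr{C}{B}\geq n$.

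Next I would check that $D$ is an antichain for~$\sqsubset$, distinguishing where two comparable elements $s\sqsubset t$ of~$D$ live. If both lie in~$C$, or both in~$\extbarr{C}{B}{n}$, this contradicts that $C$, respectively~$B$, is a block. If $s\in C$ and $t\in\extbarr{C}{B}{n}$, then $\barrsucc{C}{B}$ provides an $s'\in B$ with $s'\sqsubseteq s\sqsubset t$, again contradicting that $B$ is a block. Finally $s\in\extbarr{C}{B}{n}$ and $t\in C$ is impossible, since $t\subseteq\base{C}$ would force $s\subseteq\base{C}$, against the definition of~$\extbarr{C}{B}{n}$.

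The crux, and the only genuinely nontrivial step, is to show that every $X\in[\base{D}]^\omega$ has an initial segment in~$D$; with the antichain property this initial segment is then unique, so that $D$ is a block with base~$\base{D}$ (which is infinite, as it contains~$\base{C}$). I would split on whether $X\subseteq\base{C}$. If it is, then $X\in[\base{C}]^\omega$ has its initial segment already in the block~$C\subseteq D$. If $X\not\subseteq\base{C}$, set $k=\min(X\setminus\base{C})$; from $\base{D}\subseteq[0,n]\cup\base{C}$ we get $k\leq n$. Let $t_X$ be the initial segment of~$X$ lying in the block~$B$. If $k\in t_X$, then $t_X\subseteq X\subseteq[0,n]\cup\base{C}$ while $t_X\not\subseteq\base{C}$, so $t_X\in\extbarr{C}{B}{n}\subseteq D$. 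If $k\notin t_X$, then every entry of~$t_X$ is below~$k\leq n$, so $t_X\subseteq\base{C}$ with $\max t_X<n$, and the consequence recorded above yields $t_X\in C\subseteq D$. This last subcase is exactly where $n\leq\minbarr{C}{B}$ is indispensable, and I expect it to be the main obstacle: it prevents~$D$ from leaving a ``gap'' along which~$X$ would have no initial segment.

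Finally I would handle the relations. Here $\barrsucc{C}{D}$ is immediate from $C\subseteq D$. For $\barrpropsucc{D}{B}$ I would note $\base{D}\subseteq\base{B}$ (as $\base{C}\subseteq\base{B}$ and $\extbarr{C}{B}{n}\subseteq B$) and that each $t\in D$ admits an $s\in B$ with $s\sqsubseteq t$ — take $s=t$ when $t\in\extbarr{C}{B}{n}$ and invoke $\barrsucc{C}{B}$ when $t\in C$ — giving $\barrsucc{D}{B}$; moreover $D\not\subseteq B$, because $C\not\subseteq B$ while $D\setminus C\subseteq\extbarr{C}{B}{n}\subseteq B$. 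For $\minbarr{C}{B}=\minbarr{D}{B}$ I would compare the witnessing sets $\diffbarr{C}{B}$ and $\diffbarr{D}{B}$. Any witness $t$ for $\diffbarr{D}{B}$ satisfies $t\in B$, $t\subseteq\base{D}\subseteq[0,n]\cup\base{C}$ and $t\notin D$; it cannot contain an element outside~$\base{C}$, for then $t\in\extbarr{C}{B}{n}\subseteq D$, so $t\subseteq\base{C}$ and $t\notin C$, making $t$ a witness for $\diffbarr{C}{B}$; hence $\diffbarr{D}{B}\subseteq\diffbarr{C}{B}$ as sets of naturals. Conversely, a witness $t^*$ for $\diffbarr{C}{B}$ with $\max t^*=\minbarr{C}{B}$ has $t^*\subseteq\base{C}$, so $t^*\notin\extbarr{C}{B}{n}$ and thus $t^*\notin D$, whence $\minbarr{C}{B}\in\diffbarr{D}{B}$. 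These two facts together force $\minbarr{D}{B}=\minbarr{C}{B}$.
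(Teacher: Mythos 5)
Your proof is correct and takes essentially the same route as the paper's: the antichain cases, the key inclusion $\base{D}\subseteq[0,n]\cup\base{C}$ combined with the consequence of $n\leq\minbarr{C}{B}$ (that any $t\in B$ with $t\subseteq\base{C}$ and $\max t<n$ already lies in $C$), and the comparison of $\diffbarr{D}{B}$ with $\diffbarr{C}{B}$ all match the paper's argument. The only cosmetic deviations are that you case-split on whether $X\subseteq\base{C}$ where the paper splits on the $B$-initial segment of $X$, and that you prove $\diffbarr{D}{B}\subseteq\diffbarr{C}{B}$ together with $\minbarr{C}{B}\in\diffbarr{D}{B}$ rather than the paper's full equality $\diffbarr{C}{B}=\diffbarr{D}{B}$, which yields the same conclusion.
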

\begin{proof}
    We start by showing that $D$ is a block. It is clear that $\bigcup D\supseteq\bigcup C$ is infinite. Towards a contradiction, we assume that we have $t\sqsubset s$ with $s,t\in D$. Then $s$ and~$t$ are not both from~$C$ and not both from $\extbarr{C}{B}{n}\subseteq B$. For $t\in E(C,B,n)$ we would get $s\supset t\not\subseteq\bigcup C$ and hence~$s\notin C$. In the remaining case, we have $t\in C$ and~$s\in \extbarr{C}{B}{n}$. Due to $\barrpropsucc{C}{B}$, we then find an $s'\in B$ with~$s'\sqsubseteq t\sqsubset s\in B$, against the assumption that~$B$ is a block. Thus $D$ is a $\sqsubseteq$-antichain. Now consider an infinite set $X\subseteq\bigcup D$. We want to prove that $X$ has an initial segment in~$D$. In view of $X\subseteq\base{B}$, we may pick a~$t\in B$ that validates~$t\sqsubset X$. First assume that we have~$t\not\subseteq\bigcup C$. By the definition of $E(C,B,n)$, all elements of $\bigcup D\backslash\bigcup C$ are bounded by~$n$. Hence we have $t\subseteq [0,n]\cup\bigcup C$, so that we get $t\in E(C,B,n)\subseteq D$. Now assume we have $t\subseteq\bigcup C$. If we have $t\in C\subseteq D$, then we are done. So let us assume $t\notin C$. The latter entails $n\leq m(C,B)\leq\max t$. Due to $\bigcup D\subseteq[0,n]\cup\bigcup C$, we get $X\subseteq t\cup\bigcup C\subseteq\bigcup C$. As $C$ is a block, we find an $s\in C\subseteq D$ with $s\sqsubset X$.

    It is straightforward to see that $C\lessdot B$ entails $C\ledot D\lessdot B$. To complete the proof, we show $M(C,B)=M(D,B)$. For $\max t\in M(C,B)$ with $t\in B\backslash C$ but~$t\subseteq\bigcup C$, we obtain $t\notin E(C,B,n)$ and hence $t\notin D$ but $t\subseteq\bigcup D$, which yields $\max t\in M(D,B)$. Conversely, consider $\max s\in M(D,B)$ with $s\in B\backslash D$ but $s\subseteq\bigcup D\subseteq[0,n]\cup\bigcup C$. As $s\not\subseteq\bigcup C$ would entail $s\in E(C,B,n)\subseteq D$, we must have $s\subseteq\bigcup C$. Since we also have $s\not\in D\supseteq C$, we obtain $\max s\in M(C,B)$.
\end{proof}

We conclude our paper with the following result. Together with Theorem~\ref{thm:mba-to-Pi12CA} and Proposition~\ref{prop:laver-to-simpson}, it yields a circle of implications that establishes Theorem~\ref{thm:main}.

\begin{theorem}[$\mathsf{RCA}_0$]\label{thm:pi12-imp-mbal}
    The principle of $\Pi^1_2$-comprehension entails Laver's version of the minimal bad array lemma.
\end{theorem}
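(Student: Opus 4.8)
The plan is to formalise the construction of a minimal bad array due to Fra\"iss\'e and to isolate the single place where set existence beyond $\mathsf{RCA}_0$ is required. The guiding observation is that, although ``$f$ is $R$-bad'' is arithmetical once $f$ is presented as a function on a block, the assertion that a prescribed finite amount of data \emph{extends} to a bad array is genuinely $\Sigma^1_2$: it reads ``there are a block $C$ and a map $g\colon C\to Q$ with $g$ an $R$-bad $Q$-array, $g\ledot' f_0$, and $g$ agreeing with the given data'', and the clause that $C$ is a block is $\Pi^1_1$, since it demands that every $X\in[\base C]^\omega$ meet $C$. This is precisely the feature that pushes the lemma above $\Pi^1_1\textsf{-CA}_0$, and it tells us where $\Pi^1_2$-comprehension must enter.

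Accordingly, I would first use $\Pi^1_2$-comprehension to form, once and for all, the set
\[
S=\{\,p : p\text{ is a finite partial }Q\text{-array that extends to an }R\text{-bad }Q\text{-array }g\ledot' f_0\,\}.
\]
By the remark above, membership in $S$ is $\Sigma^1_2$ in the parameters $Q,R,\leq'$ and $f_0$, so $S$ exists. The advantage of fixing $S$ at the outset is that, with $S$ available as an oracle, the whole remaining construction is arithmetical and may be run in $\mathsf{RCA}_0$; no further appeal to comprehension is needed inside the recursion.

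Relative to $S$ I would then carry out Fra\"iss\'e's level-by-level construction. Maintaining a committed block $C_k$ together with the already-chosen values, at stage $k$ I would set $n_k=\minbarr{C_k}{B}$, where $B$ is the block of a bad completion supplied by $S$, and at each node newly exposed at level $n_k$ select the $\leq'$-least value for which the enlarged partial array still belongs to $S$; well-foundedness of $\leq'$ ensures that such a least value exists and that no node is lowered infinitely often. I would then commit the level by passing to $\barrsucc{C_k}{C_{k+1}}$ with $C_{k+1}=C_k\cup\extbarr{C_k}{B}{n_k}$, which by the preceding lemma is again a block with $\barrsucc{C_k}{C_{k+1}}\lessdot B$ and $\minbarr{C_k}{B}=\minbarr{C_{k+1}}{B}$. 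As $k$ grows the bases $\base{C_k}$ exhaust $\base B$, so the committed blocks interlock into a single block carrying a stabilised array $f$. That $f\ledot' f_0$ and that $f$ is $R$-bad follow because any bad witness involves only finitely many nodes and is therefore already controlled at a finite stage, while membership of each committed partial array in $S$ keeps a bad completion available throughout.

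The step I expect to be the main obstacle is the verification that this stabilised limit is at once a genuine $Q$-array on a block \emph{and} $\lessdot'$-minimal. Block-hood of the limit is exactly what the apparatus $\extbarr{C}{B}{n}$, $\diffbarr{C}{B}$ and the displayed lemma are designed to control, guaranteeing both that successive committed blocks fit together and that committing one level never reopens an earlier one; the delicate point is that the ambient blocks $B$ used to apply the lemma come from completions furnished by $S$ and must be reconciled from stage to stage. Minimality is where the choice of $S$ pays off: a hypothetical $R$-bad $g\lessdot' f$, examined at the least level where it strictly lowers a value of $f$, restricts to a bad completion of the already-committed data with a $\leq'$-smaller value at that node, contradicting the minimality of the value drawn from $S$. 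Making this comparison precise -- matching the block of $g$ against the committed block and checking the $\ledot$- and $\lessdot'$-bookkeeping of Definition~\ref{def:lessdot} -- is the one genuinely delicate verification; once $S$ is in hand, everything else is routine.
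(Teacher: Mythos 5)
Your proposal has a genuine gap, and it sits exactly where you locate ``the main obstacle'': the oracle $S$ cannot drive the construction. To run the $E(C,B,n)$ machinery at stage $k$ you need an \emph{actual} infinite bad completion, together with its block $B$, in order to compute $m(C_k,B)$ and $E(C_k,B,n_k)$; but $S$ only answers yes/no questions about finite partial arrays, and in $\mathsf{RCA}_0$ one cannot extract a witness from the truth of a $\Sigma^1_2$-statement. Selecting a witnessing completion at every stage, coherently with the previous stages, is an application of $\Sigma^1_2$-dependent choice along a $\Sigma^1_2$-definable relation between infinite objects --- which is precisely what the paper's proof arranges by placing $f_0$ in a countable coded $\beta_2$-model $\mathcal M$: inside $\mathcal M$ the relevant $\Pi^1_2$-conditions become absolute, so the choices reduce to searching through indices of the coded model, and $\Sigma^1_2$-dependent choice (a consequence of $\Pi^1_2$-comprehension, unlike $\Pi^1_2$-choice) suffices. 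Your claim that once $S$ exists ``the whole remaining construction is arithmetical and may be run in $\mathsf{RCA}_0$'' is therefore unjustified; the stage-to-stage reconciliation of ambient blocks is not a verification detail but the missing mechanism.

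Even granting completions at each stage, the greedy value-minimisation does not deliver $\lessdot'$-minimality. First, ``the $\leq'$-least value'' need not exist: $\leq'$ is only a well-founded \emph{partial} order, so you get minimal values, not least ones. Second, and decisively, a competitor $h:D\to Q$ with $h\lessdot' g$ lives on a finer block: its offending nodes $t$ are proper extensions of a node $s$ of $g$, with $h(t)<'g(s)$ for each such $t$, and $h$ generally splits $s$ into several nodes carrying different values. Hence $h$ does not induce a finite partial array \emph{of the committed shape} with a smaller value at $s$, and no contradiction with your greedy choice at $s$ follows --- the restriction you invoke simply is not an element of $S$ of the form your minimisation controls. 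The paper's proof avoids minimising over $Q$ altogether: at each stage it picks a whole bad array $f_{i+1}\lessdot' f_i$ minimising the \emph{numerical} invariant $m(B_{i+1},B_i)$ (a minimisation over $\mathbb N$, made meaningful for arrays outside $\mathcal M$ by $\Pi^1_2$-absoluteness), patches with $E(B,B_i,m)$ so that base elements below $m$ are preserved, proves that $p(i)=m(B_{i+1},B_i)$ is non-decreasing with $|p^{-1}(\{n\})|\leq 2^n$ so that the limit base is infinite and the limit block exists, and finally refutes any bad $h\lessdot' g$ via the inequality $m(D,B_j)\leq m(D,C)<p(j)=m(B_{j+1},B_j)$ against the stage-wise minimality of $m$. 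That fusion argument organised around the numerical invariant $m$, rather than a comprehension-defined set of extendible finite data, is the idea your proposal is missing, and without it both the block-hood of the limit and its minimality remain unproved.
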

\begin{proof}
Recall that $\Pi^1_2$-comprehension is equivalent to the principle that any subset of~$\mathbb N$ is contained in a countable coded~$\beta_2$-model (see Theorems~VII.6.9 and~VII.7.4 of~\cite{simpson09}). Here a $\beta_2$-model is an $\omega$-model for which~$\Pi^1_2$-statements are absolute. With the help of $\beta_2$-models, it is straightforward to formalize the proof of the minimal bad array lemma that is given by Fra\"iss\'e~\cite[7.3.5]{fraisse-theory-relations}. We provide details for the convenience of the reader and also to verify that the argument applies to general reflexive relations rather than just to quasi orders.

For a partial ranking $\leq'$ of a reflexive binary relation~$R$ on a set~$Q$, we consider an \mbox{$R$-bad} array $f_0:B_0\to Q$ on a block. Let us pick a $\beta_2$-model $\mathcal M$ that contains~$f_0$. The statement that a given array~$g$ is minimal $R$-bad has complexity $\Pi^1_2$, as it takes a $\Pi^1_1$-formula to assert that the domain of a potential $f\lessdot' g$ is a block. Assuming that the minimal bad array lemma fails, any $R$-bad array $g\in\mathcal M$ with $g\ledot' f_0$ will thus admit an $R$-bad array $f\lessdot' g$ with $f\in\mathcal M$. We intend to build a sequence of $R$-bad arrays $f_i\ledot'f_0$ with $f_{i+1}\lessdot' f_i$ such that the following additional properties hold, where we write $B_i$ for the block on which~$f_i$ is defined:
\begin{enumerate}[label=(\roman*)]
\item for any $n\in\bigcup B_i$ with $n\leq m(B_{i+1},B_i)$ we have $n\in\bigcup B_{i+1}$,
\item for any $R$-bad $g:C\to Q$ with $g\lessdot' f_i$ we have $m(B_{i+1},B_i)\leq m(C,B_i)$.
\end{enumerate}
The indicated relation between~$f_i$ and $f_{i+1}$ has complexity $\Pi^1_2$, by the same consideration as above. Now $\Pi^1_2$-comprehension entails $\Sigma^1_2$-dependent choice but not $\Pi^1_2$-choice (see Theorem~VII.6.9 and Remark~VII.6.3 of~\cite{simpson09}). We will soon show that any $f_i\in\mathcal M$ admits an $f_{i+1}\in\mathcal M$ with the given properties. Once this is achieved, we can lower the quantifier complexity of our $\Pi^1_2$-condition by evaluating it in the $\beta_2$-model~$\mathcal M$. Then $\Sigma^1_2$-dependent choice is more than sufficient to obtain the desired sequence of arrays~$f_i$. To be more economical, one can pick the smallest indices that represent suitable~$f_i$ in the coded model~$\mathcal M$.

As promised, we now show that each $R$-bad $f_i\ledot' f_0$ in~$\mathcal M$ admits an \mbox{$f_{i+1}\in\mathcal M$} with the properties above. We have already observed that $\mathcal M$ contains an \mbox{$R$-bad} array $f:B\to Q$ with $f\lessdot' f_i$, at least if there is no minimal $R$-bad array below~$f_0$. We pick such an $f$ with $m(B,B_i)$ as small as possible, in the sense that $\mathcal M$ contains no $R$-bad array $g:C\to Q$ with $g\lessdot' f_i$ and $m(C,B_i)<m(B,B_i)$. This minimality condition extends to~$g$ that do not lie in~$\mathcal M$, by $\Pi^1_2$-absoluteness. So~$f$ validates~(ii) but possibly not~(i). To satisfy the latter, we modify~$f$. The challenge is that~(ii) needs to be preserved. We will see that this holds for $f_{i+1}:B_{i+1}\to Q$ with
\begin{align*}
  B_{i+1}&=B \cup \extbarr{B}{B_i}{\minbarr{B}{B_i}},\\
  f_{i+1}(t)&=
  \begin{cases}
    f(t) & \text{if } t\in B,\\
    f_i(t) & \text{if } t\in \extbarr{B}{B_i}{\minbarr{B}{B_i}}.
  \end{cases}
\end{align*}
Let us first note that $f_{i+1}$ lies in~$\mathcal M$ by absoluteness, since it is arithmetically definable from~$f$ and~$f_i$. By the previous lemma, $B_{i+1}$ is a block with $B\ledot B_{i+1}\lessdot B_i$ and $m:=m(B_{i+1},B_i)=m(B,B_i)$. Due to this last equality, (ii)~remains valid. To see that we get $f_{i+1}\lessdot'f_i$, we note that $B_i\ni s\sqsubset t\in B_{i+1}$ forces $t\in B$, since we have $E(B,B_i,m)\subseteq B_i$. It remains to show that $f_{i+1}$ is $R$-bad and validates~(i).

To show that $f_{i+1}$ is $R$-bad, we argue by contradiction. Assume that we have $s,t\in B_{i+1}$ with $s\vartriangleleft t$ and $f_{i+1}(s)\,R\, f_{i+1}(t)$. Since $f$ and $f_i$ are bad, we cannot have $s,t\in B$ or $s,t\in B_i$. Concerning the two other cases, we first assume $s\in B$ and $t\in E(B,B_i,m)\subseteq B_i$. Given that we have $B\lessdot B_i$ and $B\cap E(B,B_i,m)=\emptyset$, we find an $s'\in B_i$ with $s'\sqsubset s$. We can conclude $m\leq \max s'$ due to the definition of $m=m(B,B_i)$. From $t\subseteq[0,m]\cup\bigcup B$ and $s'\subseteq s\subseteq\bigcup B$ we now get $t\subseteq\bigcup B$ and hence $t\notin E(B,B_i,m)$, against our assumption. Let us now assume that we have $s\in E(B,B_i,m)\subseteq B_i$ and $t\in B$. We find a $t'\in B_i$ with $t'\sqsubseteq t$ and thus~$s\vartriangleleft t'$. In view of $f\lessdot' f_i$, we obtain
\begin{equation*}
    f_i(s)=f_{i+1}(s)\,R\,f_{i+1}(t)=f(t)\leq' f_i(t').
\end{equation*}
Crucially, we get $f_i(s)\,R\, f_i(t')$ by our definition of partial rankings for relations that need not be transitive. We now have a contradiction since~$f_i$ is bad.

In order to establish~(i), we consider an $n\in\bigcup B_i$ with $n\leq m=m(B_{i+1},B_i)$. Due to $B\subseteq B_{i+1}$, we are done unless we have $n\notin\bigcup B$. In the latter case, we can find a $t\in B_i$ with $n\in t\subseteq [0,m]\cup\bigcup B$. We thus have $t\in E(B,B_i,m)\subseteq B_{i+1}$ and in particular $n\in\bigcup B_{i+1}$, as desired.

Under the assumption that no minimal $R$-bad array below~$f_0$ exists, we have shown that there is a sequence of $R$-bad arrays $f_{i+1}\lessdot' f_i\ledot' f_0$ that validate~(i) and~(ii) above. With the help of this sequence, we will now show that there is a minimal $R$-bad array below~$f_0$ after all. Let us first prove some properties of the function $p:\mathbb N\to\mathbb N$ with $p(i):=m(B_{i+1},B_i)$.

\begin{claim}
  For every $n\in\mathbb N$ we have $|p^{-1}(\{n\})|\leq 2^n$.
\end{claim}
{\renewcommand{\qedsymbol}{$\diamond$}
\begin{proof}[Proof of the claim]
As there are only $2^n$ sets $t\subseteq[0,n]$ with $\max t=n$, we need only show that the same~$t$ cannot witness both $p(i)=n$ and $p(j)=n$ for $i<j$. Assume $p(i)=n$ is witnessed by $t\in B_i$ with $t\subseteq\bigcup B_{i+1}$ but $t\notin B_{i+1}$. Since $B_{i+1}$ is a block, we find a $t'\in B_{i+1}$ that is $\sqsubset$-comparable with~$t$. In view of $B_{i+1}\lessdot' B_i$, we may consider an $s\in B_i$ with $s\sqsubseteq t'$. This forces $s=t$, so that we get $t\sqsubset t'$. Now if $t$ did witness $p(j)=n$, we would have $t\in B_j$. Due to $B_j\ledot' B_{i+1}$, we could pick an $s'\in B_{i+1}$ with $s'\sqsubseteq t\sqsubset t'\in B_{i+1}$, against the fact that $B_{i+1}$ is a block.
\end{proof}}

We will also need the following.
   
\begin{claim}
  The function $p$ is non-decreasing.
\end{claim}
{\renewcommand{\qedsymbol}{$\diamond$}
\begin{proof}[Proof of the claim]
Write $p(i+1)=\max s$ with $s\in B_{i+1}\backslash B_{i+2}$ and $s\subseteq\bigcup B_{i+2}$. In view of $B_{i+1}\lessdot B_i$, we find a $t\in B_i$ with~$t\sqsubseteq s$. We show how to conclude in either of two cases. First assume $t\sqsubset s$. We then have $t\subseteq s\subseteq\bigcup B_{i+1}$ but $t\notin B_{i+1}$, which yields $\max t\in M(B_{i+1},B_i)$ and hence
\begin{equation*}
p(i)=m(B_{i+1},B_i)\leq\max t<\max s=p(i+1).
\end{equation*}
Now assume that we have $t=s$. This yields $t\in B_i\backslash B_{i+2}$ and $t\subseteq\bigcup B_{i+2}$, so that we obtain $\max t\in M(B_{i+2},B_i)$ and thus
\begin{equation*}
    p(i)=m(B_{i+1},B_i)\leq m(B_{i+2},B_i)\leq\max t\leq\max s=p(i+1),
\end{equation*}
where the first inequality holds due to~(ii) above.
\end{proof}}

Let $H$ be the range of $p$. The first claim above implies that $H$ is infinite. Using the second claim, we establish $H\subseteq\bigcup B_i$ for any~$i\in\mathbb N$. In view of $B_{i+1}\lessdot B_i$, it suffices to show that $p(j)\in H$ is contained in $\bigcup B_i$ for $i\geq j$. First note that we have $p(j)=m(B_{j+1},B_j)=\max t$ for some $t\in B_j$, which yields $p(j)\in t\subseteq\bigcup B_j$. Inductively, we assume $p(j)\in\bigcup B_i$ with~$i\geq j$. Here the latter ensures that we have $p(j)\leq p(i)=m(B_{i+1},B_i)$, so that we get $p(j)\in\bigcup B_{i+1}$ by~(i) above. Let us now consider the set
\begin{equation*}
    C=\{t\in[H]^{<\omega}\,|\,\text{there is $i\in\mathbb N$ with $t\in B_j$ for all~$j\geq i$}\}.
\end{equation*}
Since all the $B_j$ are blocks, it is immediate that $C$ is a $\sqsubset$-antichain. To show that it is a block, we consider an arbitrary set~$X\in[H]^\omega$. Due to $H\subseteq\bigcup B_i$, we may pick an $s_i\in B_i$ with $s_i\sqsubset X$ for each~$i\in\mathbb N$. Since we have $f_{i+1}\lessdot' f_i$, we get $s_{i+1}\sqsubseteq s_i$ and $f_{i+1}(s_{i+1})\leq'f_i(s_i)$, where the latter is an equality precisely for $s_i=s_{i+1}$. Given that $\leq'$ is well-founded, the values $f_i(s_i)$ must indeed stabilize. So we find an $i\in\mathbb N$ such that $j\geq i$ entails $s_i=s_j\in B_j$. This yields $s_i\in C$, as needed to conclude that~$C$ is a block.

Now consider $g:C\to Q$ with $g(t)=f_i(t)$ for $t\in B_i\cap C$, which is well-defined by the previous paragraph. It is straightforward to show that $g$ is $R$-bad and that we have $g\ledot' f_i$ for any~$i\in\mathbb N$ (in fact we get $g\lessdot' f_i$). To complete the proof, we show that $g$ is minimal. Aiming at a contradiction, we assume that $h:D\to Q$ is $R$-bad with~$h\lessdot' g$. By the paragraph after Definition~\ref{def:lessdot}, we get $h\lessdot' f_i$ for any~$i\in\mathbb N$. Write $m(D,C)=\max t$ with $t\in C$ and $t\subseteq\bigcup D$ but $t\notin D$. When~$j\in\mathbb N$ is sufficiently large, we have both $t\in B_j$ and $m(D,C)<p(j)$. We get
\begin{equation*}
    m(D,B_j)\leq\max t=m(D,C)<p(j)=m(B_{j+1},B_j),
\end{equation*}
which yields a contradiction with property~(ii) from above.
\end{proof}

\bibliographystyle{amsplain}
\bibliography{Min-bad-array_Pi12-CA}

\end{document}